\documentclass[12pt]{article}
\usepackage{epsfig,amsmath,amssymb,latexsym,url}

\newcommand{\RR}{\mathbb{R}}

\def\bft{\mathbf{t}}

\def\bfx{\mathbf{x}}

\def\bfv{\mathbf{v}}
\def\bfy{\mathbf{y}}

\def\bfeps{{\boldsymbol \epsilon}}

\newtheorem{theorem}{Theorem}
\newtheorem{lemma}{Lemma}

\newtheorem{corollary}{Corollary}

\newenvironment{proof}{\begin{trivlist}\item[]{\emph{Proof.}}}
               {\hfill$\Box$\end{trivlist}}

\begin{document}

\title{A divided difference identity \\
      for a class of multiple integrals}
\author{
Michael S. Floater\footnote{
Department of Mathematics,
University of Oslo,
PO Box 1053, Blindern,
0316 Oslo,
Norway,
{\it email: michaelf@math.uio.no}}
}                                                                               
                                                                                
\maketitle

\begin{abstract}
We derive an identity that relates a class of
multiple integrals involving Vandermonde polynomials to divided differences.
Alternatively the identity can be viewed as an integral
formula for divided differences.
As part of the derivation we show that
both sums of pure partial derivatives and mixed partial derivatives of
Vandermonde polynomials are zero,
which might be of independent interest.
\end{abstract}

\noindent {\em Keywords: } multiple integral,
Vandermonde determinant, Vandermonde polynomial, divided difference.

\smallskip

\noindent {\em Math Subject Classification: }
15A15, 
65D05, 
65F40. 

\section{Introduction}
In this paper we show that there is a surprisingly simple formula
for a certain type of multiple integral involving
Vandermonde polynomials.
This formula arose while working on bounds for the determinant
of an exponential matrix, and will be used for that
purpose in \cite{Floater:2026b}.
Here, we will focus purely on a proof of the formula
(Theorem~\ref{thm:ddid}).
The formula can alternatively be viewed as an integral
formula for a divided difference (at distinct points)
which is different to the
Genocchi-Hermite formula and to the B-spline integral formula of
Curry and Schoenberg~\cite{CurrySchoenberg:1966}.
For a survey of divided differences and their role
in interpolation theory, see~de Boor~\cite{deBoor:2005},
and for some basic theory, see
Isaacson and Keller~\cite[Section 6.1]{Isaacson:1994}
and Steffensen~\cite{Steffensen:1927}.

For some $n \ge 1$, let $\bfx = (x_1,x_2,\ldots,x_{n+1})$ be a real increasing
sequence and consider the rectangle in $\RR^n$,
\begin{equation}\label{eq:seq_rect}
 R(\bfx) := [x_1,x_2] \times [x_2,x_3] \times \cdots 
	     \times [x_n,x_{n+1}].
\end{equation}
Let $\phi(\bft) = \phi(t_1,t_2,\ldots,t_n)$ be a 
continuous function $\phi : R(\bfx) \to \RR$. Then its integral
over $R(\bfx)$
is
$$ \int_{R(\bfx)} \phi(\bft) \, d\bft
 := \int_{x_n}^{x_{n+1}} \cdots \int_{x_2}^{x_3} \int_{x_1}^{x_2}
 \phi(t_1,t_2,\ldots,t_n) \, dt_1 \, dt_2 \cdots dt_n. $$
Let us denote by $V(\bft)$ the Vandermonde polynomial
\begin{equation}\label{eq:vandpoly}
V(\bft) = V(t_1,t_2,\ldots,t_n) := \prod_{1 \le i < j \le n} (t_j-t_i).
\end{equation}
We note that Vandermonde polynomials
appear in the study of generalized Vandermonde
determinants and Schur
polynomials~\cite{Macdonald:1995,deMarchi:2001,Ait-Haddou:2019}.
Let
\begin{equation}\label{eq:yidef}
y_i := x_1 + \cdots + x_{n+1-i} + x_{n+3-i} + \cdots + x_{n+1},
\quad i=1,2,\ldots,n+1.
\end{equation}
Thus, $y_i$ is the sum of the $x_j$ except the `$i$-th to last',
$x_{n+2-i}$,
and we can alternatively write
\begin{equation}\label{eq:yialt}
y_i = \Big(\sum_{j=1}^{n+1} x_j \Big) - x_{n+2-i}.
\end{equation}
Thus,
\begin{equation}\label{eq:yminmax}
\begin{aligned}
y_1 &= x_1 + \cdots + x_n, \\
y_{n+1} &= x_2 + \cdots + x_{n+1},
\end{aligned}
\end{equation}
and by (\ref{eq:yialt}),
$$ y_{i+1} - y_i = x_{n+2-i} - x_{n+1-i} > 0 $$
and so $y_1,\ldots,y_{n+1}$ are increasing.
It also follows from (\ref{eq:yminmax}) that for $\bft \in R(\bfx)$,
\begin{equation}\label{eq:tsumbound}
 y_1 \le t_1 + t_2 + \cdots + t_n \le y_{n+1}.
\end{equation}
We will show
\begin{theorem}\label{thm:ddid}
	For $f \in C^n[y_1,y_{n+1}]$,
\begin{equation}\label{eq:ddid}
\int_{R(\bfx)} V(\bft) f^{(n)}(t_1 + t_2 + \cdots + t_n)\, d\bft
	= V(\bfx) [y_1,y_2,\ldots,y_{n+1}]f,
\end{equation}
where
$[y_1,y_2,\ldots,y_{n+1}]f$ denotes the divided difference
of $f$ at the points $y_1,y_2,\ldots,y_{n+1}$.
\end{theorem}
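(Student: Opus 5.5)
The plan is to use exponentials: prove (\ref{eq:ddid}) first for $f(s)=e^{\lambda s}$ with $\lambda \neq 0$, and then pass to general $f$ by a uniqueness argument. For the extension step, write both sides of (\ref{eq:ddid}) as linear functionals of $g=f^{(n)}$ on $[y_1,y_{n+1}]$. By (\ref{eq:tsumbound}), the left side equals $\int g\,d\mu$, where $\mu$ is the pushforward of the measure $V(\bft)\,d\bft$ on $R(\bfx)$ under $\bft\mapsto t_1+\cdots+t_n$. By the Peano kernel (B-spline) representation of divided differences, the right side equals $\int g\,d\nu$, where $d\nu=\frac{V(\bfx)}{n!}M(s)\,ds$ and $M$ is the normalized B-spline with knots $y_1,\ldots,y_{n+1}$. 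Both $\mu$ and $\nu$ are finite signed measures supported in $[y_1,y_{n+1}]$. If $\int e^{\lambda s}\,d\mu=\int e^{\lambda s}\,d\nu$ for all real $\lambda\ne0$, then by continuity also at $\lambda=0$. Equality of the Laplace transforms of compactly supported measures forces $\mu=\nu$, and (\ref{eq:ddid}) follows for every $f\in C^n$. Taking $f(s)=\lambda^{-n}e^{\lambda s}$, the whole task therefore reduces to the identity
\[
\int_{R(\bfx)} V(\bft)\,e^{\lambda(t_1+\cdots+t_n)}\,d\bft = \lambda^{-n}\,V(\bfx)\,[y_1,\ldots,y_{n+1}]e^{\lambda\,\cdot}.
\]

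For the left side of this identity, write $V(\bft)=\det\big(t_j^{i-1}\big)_{i,j=1}^n$. Then $V(\bft)e^{\lambda\sum t_j}=\det\big(t_j^{i-1}e^{\lambda t_j}\big)$, and column $j$ depends only on $t_j$. Since $R(\bfx)$ is a product, multilinearity lets us integrate column by column. With $F_i$ an antiderivative of $t^{i-1}e^{\lambda t}$, this gives
\[
\det\big(F_i(x_{j+1})-F_i(x_j)\big)_{i,j=1}^n .
\]
This equals the $(n+1)\times(n+1)$ determinant whose first row is all ones and whose remaining rows are $F_i(x_1),\ldots,F_i(x_{n+1})$; one recovers the $n\times n$ form by subtracting adjacent columns.

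Now $F_i(x)=e^{\lambda x}p_i(x)$, where $p_i$ is a polynomial of degree $i-1$ with leading coefficient $1/\lambda$. Row operations using the lower rows strip $p_i$ down to $x^{i-1}/\lambda$. The determinant therefore becomes $\lambda^{-n}\det\big[\,1;\ e^{\lambda x_j}x_j^{i-1}\,\big]$. Expand along the row of ones. Column $k$ contributes $\pm e^{\lambda(x_1+\cdots+x_{n+1}-x_k)}\,V(\bfx^{(k)})$, where $\bfx^{(k)}$ omits $x_k$. By (\ref{eq:yialt}), the exponent is exactly $\lambda y_i$ with $i=n+2-k$.

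It remains to match the coefficients with the divided difference $[y_1,\ldots,y_{n+1}]e^{\lambda\cdot}=\sum_i e^{\lambda y_i}/\prod_{j\ne i}(y_i-y_j)$. We have $V(\bfx^{(k)})/V(\bfx)=\pm 1/\prod_{m\ne k}(x_k-x_m)$. Also, by (\ref{eq:yialt}), $y_i-y_j = x_{n+2-j}-x_{n+2-i}$, so $\prod_{j\ne i}(y_i-y_j)=\prod_{m\ne k}(x_m-x_k)$ with $k=n+2-i$. Hence each term agrees up to sign.

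The step I expect to be the main obstacle is this sign bookkeeping: the cofactor sign $(-1)^{k+1}$, the factor $(-1)^{n}$ from reversing each difference, and the orientation conventions of $V$ must combine to $+1$ for every $k$. I would verify it by counting transpositions and then sanity-check against the case $n=1$, where the identity reads $\int_{x_1}^{x_2}f'=f(x_2)-f(x_1)$ with $y_1=x_1$, $y_2=x_2$ and $V\equiv1$. Checking $n=2$ as well would catch a missing $(-1)^n$ factor.
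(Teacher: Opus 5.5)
Your argument is correct, but it follows a genuinely different route from the paper's.

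\textbf{The paper's route.} The paper never specializes $f$. It first shows that the operators $E_k=\sum_{i_1<\cdots<i_k}\partial_{t_{i_1}}\cdots\partial_{t_{i_k}}$ annihilate $V$. The pure-derivative sums vanish because each reduces to a divided difference of the constant $1$, and Newton's identities then handle the mixed sums. This gives the pointwise identity
$V(\bft)f^{(n)}(s(\bft))=\frac{\partial^n}{\partial t_1\cdots\partial t_n}\big(V(\bft)f(s(\bft))\big)$.
Integrating a top-order mixed derivative over the box gives an alternating sum over the $2^n$ vertices. Since $V$ vanishes when $t_i=t_{i+1}$, only the $n+1$ vertices $\bfv_i$ survive, and these satisfy $s(\bfv_i)=y_i$.

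\textbf{Your route.} You use the multiplicativity $e^{\lambda\sum t_j}=\prod_j e^{\lambda t_j}$ to write the integrand as $\det(u_i(t_j))$. The integral over the product domain then becomes a determinant of one-dimensional integrals, and everything after that is linear algebra.

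\textbf{What each buys.} The paper's argument is self-contained and treats every $f\in C^n$ directly. It also yields the derivative identities for $V$ as a by-product. Yours avoids those lemmas entirely, but needs an analytic extension step from exponentials to general $f$. That step can be lighter than Peano kernels plus Laplace uniqueness:
\begin{itemize}
\item both sides are sup-norm continuous linear functionals of $g=f^{(n)}$ on $[y_1,y_{n+1}]$;
\item the right side depends only on $g$, because the divided difference kills polynomials of degree $<n$;
\item by Stone--Weierstrass, the span of $\{e^{\lambda s}\}$ is a dense subalgebra of $C[y_1,y_{n+1}]$.
\end{itemize}
Your method also extends verbatim to integrands $\det(u_i(t_j))$ other than the Vandermonde one, whereas the paper's mechanism depends on $E_kV=0$. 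At the level of monomials, your determinant computation gives Corollary~1 immediately: $\det\big((x_{j+1}^i-x_j^i)/i\big)=V(\bfx)/n!$.

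\textbf{The sign bookkeeping.} The sign check you flagged does work out. You have
$V(\bfx)/V(\bfx^{(k)})=\prod_{a<k}(x_k-x_a)\prod_{b>k}(x_b-x_k)=(-1)^{k-1}\prod_{m\ne k}(x_m-x_k)$.
Moreover, $\prod_{m\ne k}(x_m-x_k)=\prod_{j\ne i}(y_i-y_j)$ when $i=n+2-k$. So the factor $(-1)^{k-1}$ cancels the cofactor sign $(-1)^{k+1}$ exactly, and every term enters with coefficient $+1$. No separate $(-1)^n$ appears if you write the factors as $x_m-x_k$ from the start. This is the same computation as the last step of the paper, where the vertex sign $(-1)^{n+1-i}=(-1)^{k-1}$ plays the role of your cofactor sign.
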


For example, for $n=1$,
$$ \int_{x_1}^{x_2} f'(t_1) \, dt_1
= (x_2-x_1) [y_1,y_2]f
= (x_2-x_1) [x_1,x_2]f, $$
and for $n=2$,
$$ \int_{x_2}^{x_3} \int_{x_1}^{x_2}
(t_2-t_1)
f''(t_1+t_2) \, dt_1 \, dt_2
= (x_2-x_1) (x_3-x_1)(x_3-x_2) [y_1,y_2,y_3]f, $$
where
$$ y_1 := x_1 + x_2, \quad y_2 := x_1 + x_3,  \quad y_3 := x_2 + x_3. $$

In \cite{Floater:2026b}, we will use both Theorem~\ref{thm:ddid} and the
following corollary.
Letting $f(\alpha) := \alpha^n/n!$ shows:
\begin{corollary}\label{cor:vandint}
$$ \int_{R(\bfx)} V(\bft) \, d\bft = \frac{1}{n!} V(\bfx).  $$
\end{corollary}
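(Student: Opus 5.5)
The plan is to deduce the corollary directly from Theorem~\ref{thm:ddid} by choosing a test function whose $n$-th derivative is identically one and whose divided difference is known explicitly. Take $f(\alpha) := \alpha^n/n!$. This is a polynomial, so $f \in C^n[y_1,y_{n+1}]$ and the hypothesis of Theorem~\ref{thm:ddid} is satisfied.

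First I evaluate the left-hand side of (\ref{eq:ddid}). Since $f^{(n)}(\alpha) = 1$ for all $\alpha$, the factor $f^{(n)}(t_1+\cdots+t_n)$ equals $1$ at every point of $R(\bfx)$. Note that (\ref{eq:tsumbound}) guarantees the argument lies in $[y_1,y_{n+1}]$, although this does not matter here because $f^{(n)}$ is constant. The left-hand side therefore reduces to $\int_{R(\bfx)} V(\bft)\,d\bft$.

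Next I evaluate the right-hand side. I will use the standard fact that the divided difference $[y_1,\ldots,y_{n+1}]p$ of a polynomial $p$ of degree at most $n$ at $n+1$ distinct points equals the coefficient of $\alpha^n$ in $p$. The reason is that the interpolating polynomial of degree at most $n$ is $p$ itself, and the divided difference is by definition its leading coefficient. The points $y_i$ are distinct because they were shown to be strictly increasing. Hence $[y_1,\ldots,y_{n+1}]f = 1/n!$, and the right-hand side equals $V(\bfx)/n!$.

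Equating the two sides gives the corollary. There is no genuine obstacle; the only point needing care is the distinctness of the $y_i$, which is needed for the leading-coefficient characterization and which the paper has already established.
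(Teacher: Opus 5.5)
Your proposal is correct and is exactly the paper's argument: the paper obtains the corollary by setting $f(\alpha) := \alpha^n/n!$ in Theorem~\ref{thm:ddid}, using $f^{(n)} \equiv 1$ and $[y_1,\ldots,y_{n+1}]f = 1/n!$ at the distinct points $y_i$. You also make explicit the leading-coefficient fact for divided differences, which the paper leaves unstated.
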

For example, in the case $n=2$,
$$ \int_{x_2}^{x_3} \int_{x_1}^{x_2} (t_2-t_1) \, dt_1 \, dt_2
= \frac{1}{2} (x_2-x_1) (x_3-x_1)(x_3-x_2). $$

\section{A divided difference viewpoint}
The identity of Theorem~\ref{thm:ddid} can be
expressed purely as a formula for the divided difference
$[y_1,y_2,\ldots,y_{n+1}]f$ for given
$y_1 < y_2 < \cdots < y_{n+1}$.
To see this observe first that from (\ref{eq:yialt}), 
$$ y_j - y_i = x_{n+2-i} - x_{n+2-j}, $$
and so $V(\bfx) = V(\bfy)$, where $\bfy := (y_1,y_2,\ldots,y_{n+1})$.
Next observe that treating $\bfx$ and $\bfy$ as column vectors,
they are related by the matrix equation $\bfy = M \bfx$, where
$M$ is the $(n+1) \times (n+1)$ matrix
$$
M :=
\left[
\begin{matrix}
1 & 1 & \cdots & 1 & 0 \\
1 & 1 & \cdots & 0 & 1 \\
\vdots & \vdots & & \vdots & \vdots \\
1 & 0 & \cdots & 1 & 1 \\
0 & 1 & \cdots & 1 & 1
  \end{matrix}
  \right].
  $$
It is easy to check that $M$ has the inverse
$$
M^{-1} = \frac{1}{n}
\left[
\begin{matrix}
1 & 1 & \cdots & 1 & 1-n \\
1 & 1 & \cdots & 1-n & 1 \\
\vdots & \vdots & & \vdots & \vdots \\
1 & 1-n & \cdots & 1 & 1 \\
1-n & 1 & \cdots & 1 & 1
\end{matrix}
\right],
$$
and therefore,
\begin{equation}\label{eq:xidef}
x_i = \frac{1}{n} 
\Big(\sum_{j=1}^{n+1} y_j - n y_{n+2-i} \Big),
\quad i=1,2,\ldots,n+1.
\end{equation}
So, by Theorem~\ref{thm:ddid},
$$ [y_1,y_2,\ldots,y_{n+1}]f = \frac{1}{V(\bfy)}
\int_{R(\bfx)} V(\bft) f^{(n)}(t_1 + t_2 + \cdots + t_n) \, d\bft, $$
with $\bfx$ given by (\ref{eq:xidef}).
For example, in the case $n=2$,
$$ [y_1,y_2,y_3]f = \frac{1}{V(\bfy)}
\int_{x_2}^{x_3} \int_{x_1}^{x_2}
(t_2-t_1) f''(t_1+t_2) \, dt_1 \, dt_2, $$
where
$$ V(\bfy) = (y_2-y_1)(y_3-y_1)(y_3-y_2), $$
and
$$ x_1 := \frac{y_1 + y_2 - y_3}{2}, \quad
   x_2 := \frac{y_1 - y_2 + y_3}{2}, \quad
   x_3 := \frac{-y_1 + y_2 + y_3}{2}.
$$

\section{Partial derivatives of $V(\bft)$}
The rest of this paper is devoted to proving Theorem~\ref{thm:ddid}.
A key element is the fact that the sums of mixed partial derivatives
of any fixed order of $V(\bft)$ are zero:
Lemma~\ref{lem:Vmixedderivsums}.
As the first step towards that,
we consider a polynomial of the form
\begin{equation}\label{eq:omega}
 \omega(t) := (t-t_1) (t-t_2) \cdots (t-t_m),
\end{equation}
where $t,t_1,\ldots,t_m \in \RR$.
We want to express its derivatives in terms of products of
the differences $t-t_i$.

\subsection{Derivatives of $\omega$}
To compute derivatives of $\omega$ in (\ref{eq:omega}),
consider the elementary symmetric polynomials.
These are defined, for $t_1,t_2,\ldots,t_m \in \RR$, as
$$ e_0(t_1,\ldots,t_m) := 1, $$
and for $k=1,\ldots,m$,
\begin{equation}\label{eq:elem_seq}
 e_k(t_1,t_2,\ldots,t_m) :=
  \sum_{1 \le i_1 < i_2 < \cdots < i_k \le m} t_{i_1} t_{i_2} \cdots t_{i_k}.
\end{equation}

\begin{lemma}\label{lem:ederivs}
For $k=1,\ldots,m$,
$$
 \frac{d}{dt} e_k(t-t_1,\ldots,t-t_m) =
   (m-k+1) e_{k-1}(t-t_1,\ldots,t-t_m).
$$
\end{lemma}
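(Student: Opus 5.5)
The plan is to differentiate the defining sum (\ref{eq:elem_seq}) term by term, with each $t_i$ replaced by $t-t_i$, and then count how often each product of $k-1$ factors appears. Write $u_i := t-t_i$, so that $du_i/dt = 1$ for every $i$. Then
$$ e_k(u_1,\ldots,u_m) = \sum_{S \subseteq \{1,\ldots,m\},\, |S|=k} \ \prod_{i \in S} u_i , $$
and the product rule gives
$$ \frac{d}{dt} \prod_{i \in S} u_i = \sum_{j \in S} \ \prod_{i \in S\setminus\{j\}} u_i . $$

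Summing over all $k$-subsets $S$, the derivative becomes a sum over pairs $(S,j)$ with $j \in S$ of the monomial $\prod_{i \in T} u_i$, where $T = S\setminus\{j\}$ is a $(k-1)$-subset. Conversely, a fixed $(k-1)$-subset $T$ arises from exactly those pairs with $S = T \cup \{j\}$ and $j \notin T$. There are $m-(k-1) = m-k+1$ choices of such $j$.

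Hence each monomial of $e_{k-1}(u_1,\ldots,u_m)$ appears exactly $m-k+1$ times, which gives the claimed identity. The case $k=1$ is consistent with this: $e_1 = \sum_i u_i$ has derivative $m = m\, e_0$.

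As a cross-check, one can use the generating function $\prod_{i=1}^m (1+z u_i) = \sum_k e_k z^k$. Its $t$-derivative is $\sum_i z \prod_{l\ne i}(1+zu_l)$, and the coefficient of $z^k$ there equals $\sum_i e_{k-1}(\text{all } u \text{ but } u_i)$. This is the same count in another form.

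The argument is elementary throughout. The only step requiring care is the double-counting, specifically checking that the multiplicity of each $T$ is $m-k+1$ rather than $k$.
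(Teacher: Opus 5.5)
Your proof is correct and follows essentially the same route as the paper. You differentiate each $k$-fold product by the product rule, regroup the resulting $(k-1)$-fold products, and note that each one arises once for every index in the complement of its index set, i.e.\ $m-k+1$ times; the generating-function cross-check is a valid but unnecessary repackaging of that same count.
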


\begin{proof}
By (\ref{eq:elem_seq}),
$$
e_k(t-t_1,\ldots,t-t_m) =
  \sum_{1 \le i_1 < \cdots < i_k \le m}
	(t-t_{i_1}) \cdots (t-t_{i_k}),
$$
and its derivative with respect to $t$ is
\begin{equation}\label{eq:ekderiv}
  \sum_{1 \le i_1 < \cdots < i_k \le m}
\sum_{r=1}^k
(t-t_{i_1})\cdots(t-t_{i_{r-1}})
(t-t_{i_{r+1}})\cdots(t-t_{i_k}).
\end{equation}
We can rewrite this as a sum over
products of the form
\begin{equation}\label{eq:prod}
(t-t_{j_1})\cdots(t-t_{j_{k-1}})
\end{equation}
where
$$ 1 \le j_1 < \cdots < j_{k-1} \le m. $$
Since there are $m-k+1$ elements in the complement
of $\{j_1,\ldots,j_{k-1}\}$ in $\{1,\ldots,m\}$,
the product (\ref{eq:prod}) appears $m-k+1$
times in the sum in (\ref{eq:ekderiv}) and so
$$
 \frac{d}{dt} e_k(t-t_1,\ldots,t-t_m) =
(m-k+1) \sum_{1 \le j_1 < \cdots < j_{k-1} \le m}
	(t-t_{j_1})\cdots(t-t_{j_{k-1}}).
$$
\end{proof}

This lemma enables us to find the derivatives of $\omega$ in (\ref{eq:omega}).
\begin{lemma}\label{lem:pderivs}
Let $\omega$ be the polynomial in (\ref{eq:omega}). Then
for $k=0,1,\ldots,m$,
$$ \omega^{(k)}(t) = k! \, e_{m-k}(t-t_1,t-t_2,\ldots,t-t_m).  $$
\end{lemma}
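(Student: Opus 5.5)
The plan is to argue by induction on $k$, using Lemma~\ref{lem:ederivs} as the step from $k$ to $k+1$.

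\emph{Base case} $k=0$. Here $e_m(t-t_1,\ldots,t-t_m)$ is the single product $(t-t_1)\cdots(t-t_m)$, which is $\omega(t)$. So the claim $\omega^{(0)}(t) = 0!\,e_m(\cdots)$ holds at once.

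\emph{Inductive step.} Suppose that for some $k$ with $0 \le k \le m-1$ we have
$$ \omega^{(k)}(t) = k!\, e_{m-k}(t-t_1,\ldots,t-t_m). $$
Differentiate both sides with respect to $t$. Since $1 \le m-k \le m$, Lemma~\ref{lem:ederivs} applies with index $m-k$. It gives
$$ \frac{d}{dt} e_{m-k}(t-t_1,\ldots,t-t_m) = (m-(m-k)+1)\, e_{m-k-1}(t-t_1,\ldots,t-t_m). $$
The coefficient is $m-(m-k)+1 = k+1$. Hence
$$ \omega^{(k+1)}(t) = k!\,(k+1)\, e_{m-k-1}(\cdots) = (k+1)!\, e_{m-(k+1)}(\cdots), $$
which is the claim for $k+1$. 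This completes the induction and covers all $k=0,1,\ldots,m$.

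There is no real obstacle. The only points to watch are that the index $m-k$ stays within the range $1,\ldots,m$ where Lemma~\ref{lem:ederivs} is valid, which is why the inductive step stops at $k=m-1$, and that the coefficient is computed correctly as $k+1$. As a consistency check, the case $k=m$ gives $\omega^{(m)}(t) = m!\,e_0 = m!$, as expected for a monic polynomial of degree $m$.
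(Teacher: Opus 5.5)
Your proof is correct and follows the paper's approach: the paper writes $\omega(t) = e_m(t-t_1,\ldots,t-t_m)$ and applies Lemma~\ref{lem:ederivs} repeatedly. You have made this explicit as an induction on $k$, with the index range and the coefficient $m-(m-k)+1 = k+1$ checked.
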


\begin{proof}
Since $\omega(t) = e_m(t-t_1,t-t_2,\ldots,t-t_m)$,
applying Lemma~\ref{lem:ederivs} iteratively gives
\begin{align*}
\omega'(t) &= e_{m-1}(t-t_1,t-t_2,\ldots,t-t_m), \cr
\omega''(t) &= 2! e_{m-2}(t-t_1,t-t_2,\ldots,t-t_m), \cr
\omega'''(t) &= 3! e_{m-3}(t-t_1,t-t_2,\ldots,t-t_m),
\end{align*}
and so on.
\end{proof}

\subsection{Pure partial derivatives of $V$}
Using Lemma~\ref{lem:pderivs},
we next derive a formula for the `pure' (not mixed) derivatives
of the Vandermonde polynomial $V(\bft)$ in (\ref{eq:vandpoly})
when $t_1,\ldots,t_n$ are distinct.
\begin{lemma}\label{lem:Vderivs}
Let $V(\bft)$ be the Vandermonde polynomial
of (\ref{eq:vandpoly}) with $t_1,\ldots,t_n$ distinct.
For each $i=1,\ldots,n$ and $k=1,\ldots,n-1$,
$$
\frac{\partial^k}{\partial t_i^k} V(\bft)
= k! \, V(\bft) \,
e_k\Big(\frac{1}{t_i-t_1},\ldots,\frac{1}{t_i-t_{i-1}},
	\frac{1}{t_i-t_{i+1}},\ldots,\frac{1}{t_i-t_n}\Big).
$$
\end{lemma}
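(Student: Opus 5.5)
Fix $i$ and isolate the dependence of $V(\bft)$ on $t_i$. Up to a sign, every factor of $V(\bft)$ containing $t_i$ is of the form $t_i - t_j$ with $j \ne i$. So we can write
$$ V(\bft) = \sigma_i \, C_i \, \omega_i(t_i), \qquad
\omega_i(t) := \prod_{j \ne i} (t - t_j), $$
where $\sigma_i = (-1)^{i-1}$ comes from the factors $t_i - t_j$ with $j<i$, which appear in $V$ as $(t_i-t_j)$ rather than $(t_j - t_i)$. Here $C_i = \prod_{j<l,\; j,l\ne i} (t_l - t_j)$ does not depend on $t_i$. The precise value of the sign is irrelevant, because it cancels at the end.

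Since $\sigma_i C_i$ is constant in $t_i$, we have
$$ \frac{\partial^k}{\partial t_i^k} V(\bft) = \sigma_i C_i\, \omega_i^{(k)}(t_i). $$
Now apply Lemma~\ref{lem:pderivs} to $\omega_i$, which has $m = n-1$ roots $t_j$, $j\ne i$. For $k \le n-1$ this gives
$$ \omega_i^{(k)}(t_i) = k!\, e_{n-1-k}\big( (t_i - t_j)_{j \ne i} \big). $$

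The remaining step is the elementary identity
$$ e_{m-k}(a_1,\ldots,a_m) = a_1 a_2 \cdots a_m \; e_k\Big(\frac{1}{a_1},\ldots,\frac{1}{a_m}\Big), $$
valid for nonzero $a_j$. To verify it, expand the right-hand side: each term $\prod_{j\in S} a_j^{-1}$ with $|S| = k$, multiplied by the full product, becomes $\prod_{j \notin S} a_j$. Summing over all $S$ of size $k$ is the same as summing over the complements, which are all subsets of size $m-k$. This is where distinctness of the $t_j$ is used, since we need $a_j = t_i - t_j \neq 0$.

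Take $a_j = t_i - t_j$ for $j\ne i$. Then $\prod_j a_j = \omega_i(t_i)$, and $\sigma_i C_i\, \omega_i(t_i) = V(\bft)$, which yields
$$ \frac{\partial^k}{\partial t_i^k} V(\bft) = k!\, V(\bft)\, e_k\Big(\frac{1}{t_i - t_j}\Big)_{j\ne i}, $$
as claimed.

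There is no real obstacle. The only points needing care are the factorization step, where one must confirm that the sign and the remaining factors are independent of $t_i$, and the complementation identity. Both are routine.
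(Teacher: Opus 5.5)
Your proof is correct and follows the paper's argument: write $V(\bft)=c\,\omega(t_i)$ with $c$ independent of $t_i$, apply Lemma~\ref{lem:pderivs} with $m=n-1$, and divide by $\omega(t_i)$ using the complementation identity $e_{m-k}(a)=\big(\prod_j a_j\big)\,e_k(1/a)$, which the paper uses without comment and you verify. One small slip: the sign is $(-1)^{n-i}$, coming from the factors $t_l-t_i$ with $l>i$, not $(-1)^{i-1}$. As you say, this is harmless, since only $V(\bft)=\sigma_i C_i\,\omega_i(t_i)$ is used.
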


\begin{proof}
We can write $V(\bft) = c \omega(t_i)$, where
$$
c := (-1)^{n-i} V(t_1,\ldots,t_{i-1},t_{i+1},\ldots,t_n),
$$
which is independent of $t_i$, and
$$ \omega(t) := (t-t_1) \cdots (t-t_{i-1})
(t-t_{i+1}) \cdots (t-t_n). $$
Then
$$
\frac{\partial^k}{\partial t_i^k} V(\bft)
	= c \omega^{(k)}(t_i)
	= V(\bft) \frac{\omega^{(k)}(t_i)}{\omega(t_i)}.
$$
By Lemma~\ref{lem:pderivs},
$$
\omega^{(k)}(t) = k! \, 
e_{n-1-k}(t-t_1,\ldots,t-t_{i-1},t-t_{i+1},\ldots,t-t_n),
$$
and therefore,
$$
\frac{\omega^{(k)}(t_i)}{\omega(t_i)}
= k!
e_k\Big(\frac{1}{t_i-t_1},\ldots,\frac{1}{t_i-t_{i-1}},
	\frac{1}{t_i-t_{i+1}},\ldots,\frac{1}{t_i-t_n}\Big).
$$
\end{proof}

The lemma covers the cases $k=1,\ldots,n-1$.
Since $V(\bft)$ is a polynomial of degree $n-1$
with respect to $t_i$, it follows that
\begin{equation}\label{eq:Vderivn}
\frac{\partial^n}{\partial t_i^n} V(\bft) = 0.
\end{equation}

\subsection{Sums of pure derivatives}
Next we show that the sums of pure derivatives of $V(\bft)$ are zero.
\begin{lemma}\label{lem:Vderivsums}
Let $V(\bft) := V(t_1,\ldots,t_n)$ for any $t_1,\ldots,t_n \in \RR$.
For each $i=1,\ldots,n$ and $k=1,\ldots,n$,
$$
\sum_{i=1}^n \frac{\partial^k}{\partial t_i^k} V(\bft) = 0.
$$
\end{lemma}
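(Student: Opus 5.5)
The case $k=n$ is immediate from (\ref{eq:Vderivn}), since each term vanishes individually. For $1\le k\le n-1$ I will first treat distinct $t_1,\ldots,t_n$ and then extend to all $\bft$ by continuity. The left-hand side is a polynomial in $\bft$, and distinct points are dense in $\RR^n$, so vanishing on distinct points implies vanishing everywhere.

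For distinct points, Lemma~\ref{lem:Vderivs} reduces the claim to the identity
$$ \sum_{i=1}^n e_k\Big(\Big\{\frac{1}{t_i-t_j}\Big\}_{j\ne i}\Big) = 0, \qquad k=1,\ldots,n-1. $$
Expanding each $e_k$ turns the left side into a sum over pairs $(i,S)$, with $S\subset\{1,\ldots,n\}\setminus\{i\}$ and $|S|=k$, of $\prod_{j\in S}(t_i-t_j)^{-1}$. I will regroup this sum by the $(k+1)$-element set $T=S\cup\{i\}$. The contribution from a fixed $T$ is
$$ \sum_{i\in T}\prod_{j\in T,\,j\ne i}\frac{1}{t_i-t_j}. $$
This is exactly the divided difference $[t_{T}]g$ of the constant function $g\equiv1$ over the $k+1\ge2$ distinct points indexed by $T$. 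Equivalently, it is the coefficient of $x^k$ in the Lagrange interpolant of $1$ at those points, and that coefficient is zero. Hence every group vanishes, and so does the total.

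The only step needing care is the regrouping bookkeeping, namely checking that each pair $(i,S)$ corresponds bijectively to a pair ($T$, distinguished element $i\in T$). This is clear. The rest is the standard fact that divided differences of order $\ge1$ annihilate constants.

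As an alternative that avoids distinctness entirely, I could use the operator $D_k=\sum_i\partial_i^k$. It commutes with permutations of the variables, so $D_kV$ is antisymmetric and hence divisible by $V$. But $D_kV$ has degree $\deg V-k<\deg V$, which forces $D_kV=0$. I may mention this as a remark, but I will use the divided-difference argument as the main proof since it builds on Lemma~\ref{lem:Vderivs}.
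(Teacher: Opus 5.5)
Your proposal is correct and follows the paper's proof essentially step for step. You take the case $k=n$ from (\ref{eq:Vderivn}), reduce to distinct points (making explicit the density argument the paper leaves implicit), apply Lemma~\ref{lem:Vderivs}, regroup the pairs $(i,S)$ by the $(k+1)$-set $T=S\cup\{i\}$, and recognize each group as a vanishing divided difference of the constant function $1$. Your alternative antisymmetry remark is also valid: $\sum_i \partial_i^k V$ is antisymmetric with degree below $\binom{n}{2}$, hence zero. That route avoids both Lemma~\ref{lem:Vderivs} and the reduction to distinct points.
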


\begin{proof}
For $k=n$ this is implied by (\ref{eq:Vderivn}).
For $1 \le k \le n-1$
it is sufficient to prove it for distinct $t_1,\ldots,t_n$.
Then, by Lemma~\ref{lem:Vderivs},
$$
\sum_{i=1}^n \frac{\partial^k}{\partial t_i^k} V(\bft)
= k! \, V(\bft) S, $$
where
$$ S :=
\sum_{i=1}^n
\sum_{\substack{1 \le i_1 < \cdots < i_k \le m \\
	i \not\in \{i_1,\ldots,i_k\}}}
	\frac{1}{\prod_{j=1}^k(t_i-t_{i_j})}.
$$
By considering the sequence of length $k+1$
formed by $i$ and $i_1,\ldots,i_k$,
we can rewrite $S$ as
$$ S = \sum_{1 \le j_1 < \cdots < j_{k+1} \le m}
S(j_1,j_2,\ldots,j_{k+1}), $$
where
$$
S(j_1,j_2,\ldots,j_{k+1}) :=
\sum_{r=1}^{k+1}
\frac{1}{\prod_{s=1, s \ne r}^{k+1}(t_{j_r}-t_{j_s})}.
$$
We recognize $S(j_1,j_2,\ldots,j_{k+1})$
as the divided difference of order $k$ of the constant function~$1$
over the $k+1$ points $t_{j_1},\ldots,t_{j_{k+1}}$,
which is zero.
\end{proof}

\subsection{Sums of mixed derivatives}
We now show that the sums of mixed derivatives of $V(\bft)$ are also zero.
For $k=1,\ldots,n$, let $E_k$ be the differential operator
\begin{equation}\label{eq:Ek}
E_k := \sum_{1 \le i_1 < \cdots < i_k \le n}
\frac{\partial^k}{\partial t_{i_1} \cdots \partial t_{i_k}}.
\end{equation}
\begin{lemma}\label{lem:Vmixedderivsums}
Let $V(\bft) = V(t_1,\ldots,t_n)$ for $t_1,\ldots,t_n \in \RR$.
Then $E_k V(\bft) = 0$ for all $k=1,\ldots,n$.
\end{lemma}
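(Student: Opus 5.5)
The plan is to reduce the mixed-derivative sums $E_k$ to the pure-derivative sums of Lemma~\ref{lem:Vderivsums} through Newton's identities, regarding everything as commuting differential operators. Set $D_i := \partial/\partial t_i$ and introduce the power-sum operators
$$ P_k := \sum_{i=1}^n D_i^k, \qquad k=1,2,\ldots. $$
The operator $E_k$ in (\ref{eq:Ek}) is exactly the elementary symmetric polynomial $e_k(D_1,\ldots,D_n)$ in the commuting variables $D_1,\ldots,D_n$. Because the $D_i$ commute, any polynomial identity relating $e_k$ and power sums in $n$ commuting indeterminates remains valid with $D_i$ substituted. Newton's identities therefore give, for $k=1,\ldots,n$,
$$ k E_k = \sum_{j=1}^{k} (-1)^{j-1} E_{k-j} P_j, \qquad E_0 := \mathrm{id}. $$

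The argument is then an induction on $k$ in which every term on the right is applied to $V$. Because the operators commute, we may write $E_{k-j}P_j V = E_{k-j}(P_j V)$. By Lemma~\ref{lem:Vderivsums}, $P_j V = 0$ for every $j=1,\ldots,n$, and each $j$ occurring here satisfies $1\le j\le k\le n$. Every term on the right thus vanishes, so $kE_kV=0$ and hence $E_kV=0$. The same reasoning makes the induction unnecessary: each summand already contains a factor $P_j$ with $j\le n$. We only need the vanishing of $P_jV$ for $1\le j\le n$, which is exactly what Lemma~\ref{lem:Vderivsums} supplies. It holds for all real $t_i$, not only distinct ones, so no continuity argument is needed here.

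The one point to check is that Newton's identities can legitimately be applied to operators. Two facts make this safe. First, they are identities in the polynomial ring $\mathbb{Z}[X_1,\ldots,X_n]$. Second, the map $X_i\mapsto D_i$ is a ring homomorphism into the commutative algebra of constant-coefficient differential operators acting on polynomials. I would state this explicitly and perhaps quote the identity in the form above, for example citing Macdonald~\cite{Macdonald:1995}. If a self-contained argument is preferred, a generating-function identity
$$ \sum_k E_k z^k = \prod_i (1+zD_i) = \exp\Big(\sum_{j\ge1} \frac{(-1)^{j-1}}{j}P_j z^j\Big) $$
does the job. It expresses $E_k$ as a polynomial in $P_1,\ldots,P_k$ with zero constant term when $k\ge1$. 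Applying it to $V$ then gives zero immediately.
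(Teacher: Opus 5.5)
Your proposal is correct and is essentially the paper's proof: Newton's identities for the commuting operators $E_k = e_k(D_1,\ldots,D_n)$ and $P_j=\sum_i D_i^j$, combined with $P_jV=0$ for $1\le j\le n$ from Lemma~\ref{lem:Vderivsums}. Your explicit justification through the ring homomorphism $X_i\mapsto D_i$ makes precise what the paper only asserts by calling the operators ``analogous'' to the symmetric polynomials.
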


\begin{proof}
Let $P_k$ denote the differential operator
$$ P_k := \sum_{i=1}^n \frac{\partial^k}{\partial t_i^k}. $$
We have already shown that $P_k V(\bft) = 0$ for all $k=1,\ldots,n$.
The operators $P_k$ and $E_k$ are analogous to the
power sum and elementary symmetric polynomials and thus
obey Newton's identities:
\begin{equation*}
\begin{aligned}
E_1 &= P_1, \cr
2E_2 &= E_1 P_1 - P_2, \cr
3E_2 &= E_2 P_1 - E_1 P_2 + P_3, \cr
\cdots \cr
k E_k &= \sum_{i=1}^{k-1} (-1)^{i-1} E_{k-i} P_i + (-1)^{k-1} P_k.
\end{aligned}
\end{equation*}
Thus, since $P_1V(\bft) = \cdots = P_kV(\bft) = 0$,
it follows that $E_kV(\bft) = 0$ as well.
\end{proof}

\section{The sum function}

For $\bft =(t_1,\ldots,t_n) \in \RR^n$, let $s: \RR^n \to \RR$ denote the sum
\begin{equation}\label{eq:s}
 s(\bft) := t_1 + \cdots + t_n.
\end{equation}
We note that $s$ has the property that
$$ \frac{\partial s}{\partial t_i} = 1, \qquad i=1,\ldots,n. $$
It then follows from the chain rule that if $f \in C^1(s(\Omega))$ for some
domain $\Omega \subseteq \RR^n$, then
$$ \frac{\partial}{\partial t_i} \big(f(s(\bft))\big)
  = f'(s(\bft)), \qquad i=1,\ldots,n. $$
Building on this fact, we next compute the highest order mixed derivative
of the product of $f \circ s$ with some other function $\psi$.
Recall the notation of (\ref{eq:Ek}) and define $E_0 := {\rm id}$.

\begin{lemma}\label{lem:chain}
Suppose that $\psi \in C^n(\Omega)$ and $f \in C^n(s(\Omega))$
for some domain $\Omega \subseteq \RR^n$, and define the product
$$ \phi(\bft) := \psi(\bft)f\big(s(\bft)\big), \qquad \bft \in \Omega. $$
Then,
\begin{equation}\label{eq:Echain}
\frac{\partial^n}{\partial t_1 \cdots \partial t_n} \phi(\bft)
= E_n\phi(\bft)
= \sum_{k=0}^n E_k \psi(\bft) f^{(n-k)}\big(s(\bft)\big), \quad \bft \in \Omega.
\end{equation}
\end{lemma}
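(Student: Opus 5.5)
The first equality in (\ref{eq:Echain}) is immediate from (\ref{eq:Ek}). For $k=n$ the only index set is $\{1,\ldots,n\}$, so $E_n = \partial^n/(\partial t_1 \cdots \partial t_n)$. The real content is the second equality, which I would prove by applying the $n$ partial derivatives one at a time. The cleanest bookkeeping is an induction that differentiates in $t_1, t_2, \ldots$ successively. So I would prove, for $m=0,1,\ldots,n$, the intermediate claim
$$ \frac{\partial^m}{\partial t_1 \cdots \partial t_m} \phi(\bft)
= \sum_{A \subseteq \{1,\ldots,m\}} \Big(\frac{\partial^{|A|}}{\prod_{i\in A}\partial t_i}\psi(\bft)\Big) f^{(m-|A|)}\big(s(\bft)\big). $$
Here $A=\emptyset$ means no derivative of $\psi$.

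For $m=0$ the claim is just the definition of $\phi$. For the inductive step, I apply $\partial/\partial t_{m+1}$ to each term of the sum by the product rule. Differentiating the factor involving $\psi$ produces the term indexed by $A\cup\{m+1\}$ with the same power $f^{(m-|A|)} = f^{((m+1)-|A\cup\{m+1\}|)}$. Differentiating $f^{(m-|A|)}(s(\bft))$ uses the chain-rule fact $\partial s/\partial t_{m+1}=1$ recorded just before the lemma, and gives the term indexed by $A$ with $f^{(m+1-|A|)}$. Every subset of $\{1,\ldots,m+1\}$ either contains $m+1$ or not, so each arises exactly once, and the claim holds for $m+1$. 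The regularity assumptions $\psi\in C^n(\Omega)$ and $f\in C^n(s(\Omega))$ guarantee that all derivatives up to order $n$ exist and are continuous, so the product and chain rules apply at each stage. They also make the order of the mixed partials irrelevant.

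Taking $m=n$, I would group the subsets $A\subseteq\{1,\ldots,n\}$ by cardinality $k=|A|$. The inner sum over $|A|=k$ of the mixed derivatives of $\psi$ is exactly $E_k\psi$ by (\ref{eq:Ek}), with $E_0=\mathrm{id}$ covering $A=\emptyset$. This gives $\sum_{k=0}^n E_k\psi(\bft) f^{(n-k)}(s(\bft))$, as required.

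I do not expect a serious obstacle. The only point needing care is stating the induction hypothesis in terms of subsets, rather than trying to handle $E_n$ directly, so that the combinatorial count is immediate. One could instead expand by the general Leibniz rule for a product of two functions under a product of distinct first-order operators: $\prod_i \partial_i (\psi g) = \sum_A (\partial_A \psi)(\partial_{A^c} g)$, with $g=f\circ s$ and $\partial_{A^c} g = f^{(n-|A|)}\circ s$. The induction above is simply a proof of that rule in this setting.
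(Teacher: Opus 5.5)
Your proof is correct and is essentially the paper's: the paper proves the same intermediate claim (indexed by increasing tuples $1 \le i_1 < \cdots < i_k \le m$ rather than subsets $A$) by induction on $m$. Its inductive step splits $\partial/\partial t_{m+1}$ into the term that differentiates $\psi$ (the subsets containing $m+1$) and the term that differentiates $f^{(m-k)}(s(\bft))$ (the subsets not containing $m+1$). Your remarks that $E_n = \partial^n/\partial t_1\cdots\partial t_n$ and on regularity only spell out what the paper leaves implicit.
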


\begin{proof}
We show more generally that
\begin{equation}\label{eq:Echainm}
\frac{\partial^m}{\partial t_1 \cdots \partial t_m} \phi(\bft)
= \sum_{k=0}^m
\sum_{1 \le i_1 < \cdots < i_k \le m}
\frac{\partial^k}{\partial t_{i_1} \cdots \partial t_{i_k}}
\psi(\bft) f^{(m-k)}\big(s(\bft)\big)
\end{equation}
for all $m=0,1,\ldots,n$ by induction on $m$.
This is true for $m=0$ by the definition of $\phi$.
Suppose it holds for some $m$, $0 \le m \le n-1$.
Differentiation with respect to $t_{m+1}$ gives
$$ 
\frac{\partial^{m+1}}{\partial t_1 \cdots \partial t_{m+1}} \phi(\bft)
   = a + b, $$
where
\begin{align*}
a &:= \sum_{k=0}^m 
\sum_{1 \le i_1 < \cdots < i_k \le m}
\frac{\partial^{k+1}}{\partial t_{i_1} \cdots \partial t_{i_k} \partial t_{m+1}}
\psi(\bft) f^{(m-k)}\big(s(\bft)\big), \cr
b &:= \sum_{k=0}^m 
\sum_{1 \le i_1 < \cdots < i_k \le m}
\frac{\partial^k}{\partial t_{i_1} \cdots \partial t_{i_k}}
\psi(\bft) f^{(m+1-k)}\big(s(\bft)\big).
\end{align*}
We can write $a$ as
\begin{align*}
a &= \sum_{k=1}^{m+1} 
\sum_{1 \le i_1 < \cdots < i_{k-1} \le m}
\frac{\partial^k}{\partial t_{i_1} \cdots \partial t_{i_{k-1}} \partial t_{m+1}}
\psi(\bft) f^{(m+1-k)}\big(s(\bft)\big) \cr
&= \sum_{k=1}^{m+1} 
\sum_{1 \le i_1 < \cdots < i_k = m+1}
\frac{\partial^k}{\partial t_{i_1} \cdots \partial t_{i_k}}
\psi(\bft) f^{(m+1-k)}\big(s(\bft)\big),
\end{align*}
and adding this to $b$ yields the case $m+1$ of (\ref{eq:Echainm}).
\end{proof}

Now let $V(\bft)$ be the Vandermonde polynomial in~(\ref{eq:vandpoly}).
By Lemma~\ref{lem:Vmixedderivsums},
$E_k V(\bft) = 0$ for all $k=1,\ldots,n$, while $E_0 V(\bft) = V(\bft)$.
Therefore, letting $\psi = V$ in~Lemma~\ref{lem:chain} implies

\begin{lemma}\label{lem:chainV}
Suppose that $f \in C^n(s(\Omega))$ for some domain $\Omega \subseteq \RR^n$.
Then,
$$ V(\bft) f^{(n)}(s(\bft)) =
\frac{\partial^n}{\partial t_1 \cdots \partial t_n}
\big(V(\bft) f(s(\bft))\big), \qquad \bft \in \Omega. $$
\end{lemma}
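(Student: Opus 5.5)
Lemma~\ref{lem:chainV} follows directly from Lemma~\ref{lem:chain} with $\psi := V$, together with the vanishing of the mixed derivative sums in Lemma~\ref{lem:Vmixedderivsums}.

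First I check the hypotheses of Lemma~\ref{lem:chain}. $V$ is a polynomial, so $V \in C^n(\Omega)$ for any domain $\Omega \subseteq \RR^n$. Also $f \in C^n(s(\Omega))$ by assumption, so $\phi(\bft) := V(\bft) f(s(\bft))$ satisfies the lemma's hypotheses.

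By (\ref{eq:Echain}),
$$ \frac{\partial^n}{\partial t_1 \cdots \partial t_n}\big(V(\bft) f(s(\bft))\big) = \sum_{k=0}^n E_k V(\bft)\, f^{(n-k)}\big(s(\bft)\big). $$
Lemma~\ref{lem:Vmixedderivsums} gives $E_k V(\bft) = 0$ for every $k=1,\ldots,n$ and every $\bft \in \RR^n$, in particular on $\Omega$. So only the $k=0$ term survives. Since $E_0 = {\rm id}$, that term is $V(\bft) f^{(n)}(s(\bft))$, which is the claimed identity.

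No step here is a real obstacle; the substantive work was done earlier. The only point to confirm is that Lemma~\ref{lem:Vmixedderivsums} holds pointwise for all real $\bft$, not only for distinct $t_i$. This is true: Lemma~\ref{lem:Vderivsums} extends from distinct points to all points because $P_k V$ is a polynomial, hence continuous. The Newton identities used to pass from $P_k V = 0$ to $E_k V = 0$ are identities between commuting constant-coefficient operators, so they also hold everywhere. The result therefore applies on any domain $\Omega$, including $R(\bfx)$ with $s(R(\bfx)) \subseteq [y_1,y_{n+1}]$ by (\ref{eq:tsumbound}).
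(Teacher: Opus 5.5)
Your proposal is correct and follows the paper's argument exactly: set $\psi = V$ in Lemma~\ref{lem:chain}, and use $E_k V = 0$ for $k \ge 1$ (Lemma~\ref{lem:Vmixedderivsums}) together with $E_0 V = V$, so only the $k=0$ term survives. Your extra remark is also right: the vanishing holds for all real $\bft$ because $P_k V$ is a polynomial, and Newton's identities hold as identities between commuting constant-coefficient operators.
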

This observation will simplify the integration in Theorem~\ref{thm:ddid}.

\section{Integration on a rectangle}

Let us now look at integration on a rectangle in $\RR^n$.
Such a rectangle has the form
$$ R := [a_1,b_1] \times [a_2,b_2] \times \cdots 
	     \times [a_n,b_n], $$
and has $2^n$ vertices which we can denote by
$$ \bfv_\bfeps := ((1-\epsilon_1)a_1 + \epsilon_1 b_1,
(1-\epsilon_2)a_2 + \epsilon_2 b_2,
\ldots, (1-\epsilon_n)a_n + \epsilon_n b_n), $$
for $\bfeps = (\epsilon_1,\ldots,\epsilon_n) \in \{0,1\}^n$.
It is easy to integrate the highest order mixed derivative
of a function of $n$ variables on $R$.
Similar to (\ref{eq:s}) we define
$$ s(\bfeps) := \epsilon_1 + \epsilon_2 + \cdots + \epsilon_n. $$
\begin{lemma}\label{lem:intrect}
For any $\phi \in C^n(R)$,
\begin{equation}\label{eq:I}
\int_R
\frac{\partial^n \phi}{\partial t_1 \cdots \partial t_n}
\, dt_1 \cdots dt_n
= \sum_{\bfeps \in \{0,1\}^n}
(-1)^{n - s(\bfeps)} \phi(\bfv_\bfeps).
\end{equation}
\end{lemma}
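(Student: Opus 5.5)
The plan is to prove (\ref{eq:I}) by induction on $n$, integrating one variable at a time with the fundamental theorem of calculus. Since $\phi \in C^n(R)$, the mixed partial is continuous on the compact rectangle, so Fubini's theorem lets us evaluate the integral as an iterated integral in any order. We also need, at each stage, that the order of differentiation can be rearranged. This holds because all the partial derivatives involved are continuous (Schwarz/Clairaut).

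For $n=1$ the statement is the fundamental theorem of calculus:
$$\int_{a_1}^{b_1}\phi'(t_1)\,dt_1=\phi(b_1)-\phi(a_1)=\sum_{\epsilon_1\in\{0,1\}}(-1)^{1-\epsilon_1}\phi(\bfv_{\epsilon_1}).$$

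For the inductive step, I would first integrate in $t_n$ over $[a_n,b_n]$, holding $t_1,\ldots,t_{n-1}$ fixed. Write the integrand as $\partial_{t_n}\big(\partial^{n-1}\phi/\partial t_1\cdots\partial t_{n-1}\big)$. The inner integral is then
$$\frac{\partial^{n-1}\phi}{\partial t_1\cdots\partial t_{n-1}}(t_1,\ldots,t_{n-1},b_n)-\frac{\partial^{n-1}\phi}{\partial t_1\cdots\partial t_{n-1}}(t_1,\ldots,t_{n-1},a_n).$$
Each term is the top-order mixed derivative of the function $\phi_c(t_1,\ldots,t_{n-1}):=\phi(t_1,\ldots,t_{n-1},c)$ with $c\in\{a_n,b_n\}$. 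This function lies in $C^{n-1}$ of the $(n-1)$-dimensional rectangle $[a_1,b_1]\times\cdots\times[a_{n-1},b_{n-1}]$, and restricting to $t_n=c$ commutes with differentiating in the other variables.

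Next I apply the induction hypothesis to $\phi_{b_n}$ and $\phi_{a_n}$ and integrate over the remaining variables. This gives
$$\sum_{\bfeps'\in\{0,1\}^{n-1}}(-1)^{n-1-s(\bfeps')}\big(\phi(\bfv_{(\bfeps',1)})-\phi(\bfv_{(\bfeps',0)})\big).$$
Setting $\bfeps=(\bfeps',\epsilon_n)$, the sign attached to $\epsilon_n=1$ is $(-1)^{n-1-s(\bfeps')}=(-1)^{n-s(\bfeps)}$. The sign attached to $\epsilon_n=0$ is $-(-1)^{n-1-s(\bfeps')}=(-1)^{n-s(\bfeps)}$. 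Hence the sum is exactly the right-hand side of (\ref{eq:I}).

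There is no real obstacle in this argument. The only points needing care are the regularity justifications (Fubini and the interchange of the order of mixed partials, both supplied by $\phi\in C^n(R)$) and the sign bookkeeping in the last step.
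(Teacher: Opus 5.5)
Your proof is correct and is essentially the paper's argument: apply the fundamental theorem of calculus one variable at a time, with the same sign bookkeeping. The differences are cosmetic. You induct on the dimension and integrate out $t_n$ first, whereas the paper fixes $n$ and inducts on the number $k$ of variables already integrated out, starting with $t_1$, via the intermediate formula (\ref{eq:Ik}); you also spell out the Fubini and Schwarz justifications that the paper leaves implicit.
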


\begin{proof}
We show more generally that the integral can be written as
\begin{equation}\label{eq:Ik}
\sum_{\bfeps \in \{0,1\}^k}
(-1)^{k - s(\bfeps)}
\int_{a_n}^{b_n} \cdots \int_{a_{k+1}}^{b_{k+1}}
\frac{\partial^{n-k}}{\partial t_{k+1} \cdots \partial t_n}
\phi(\bfv_{\bfeps,k})
\, dt_{k+1} \cdots dt_n
\end{equation}
for any $k=0,1,\ldots,n$, where
$$ \bfv_{\bfeps,k} :=
 ((1-\epsilon_1)a_1 + \epsilon_1 b_1,
\ldots, (1-\epsilon_k)a_k + \epsilon_k b_k,
t_{k+1},\ldots,t_n). $$
The case $k=0$ of (\ref{eq:Ik}) is the left hand side of (\ref{eq:I}).
Suppose (\ref{eq:Ik}) holds for some~$k$, $0 \le k \le n-1$.
Then its inner integral is
\begin{multline*}
\frac{\partial^{n-k-1}}{\partial t_{k+2} \cdots \partial t_n}
\phi(\bfv_{(\bfeps,1),k+1})
- 
\frac{\partial^{n-k-1}}{\partial t_{k+2} \cdots \partial t_n}
\phi(\bfv_{(\bfeps,0),k+1}) \\
= \sum_{\epsilon_{k+1}=0}^1 (-1)^{1-\epsilon_{k+1}}
\frac{\partial^{n-k-1}}{\partial t_{k+2} \cdots \partial t_n}
\phi(\bfv_{(\bfeps,\epsilon_{k+1}),k+1}),
\end{multline*}
and moving this summation out of the remaining integrals
yields the case $k+1$ of (\ref{eq:Ik}).
Thus (\ref{eq:Ik}) also holds for $k=n$, which
is the right hand side of (\ref{eq:I}).
\end{proof}

Suppose next that $R$ is the sequential rectangle $R(\bfx)$
of (\ref{eq:seq_rect})
and that $\phi$ has the zero property
\begin{equation}\label{eq:zero_property}
\phi(\bft) = 0 \quad \hbox{whenever $t_i = t_{i+1}$
for any $i \in \{1,2,\ldots,n-1\}$.}
\end{equation}
Then the sum in (\ref{eq:I}) is reduced from $2^n$ terms to $n+1$.
\begin{lemma}\label{lem:iid}
If $\phi \in C^n(R(\bfx))$ and $\phi$ has the zero property
(\ref{eq:zero_property}) then
\begin{equation}\label{eq:int_simp}
\int_{R(\bfx)}
\frac{\partial^n \phi}{\partial t_1 \cdots \partial t_n}
\, dt_1 \cdots dt_n
= \sum_{i=1}^{n+1} (-1)^{n+1-i} \phi(\bfv_i),
\end{equation}
where
\begin{equation}\label{eq:vi}
\bfv_i := (x_1,\ldots,x_{n+1-i},x_{n+3-i},\ldots,x_{n+1}).
\end{equation}
\end{lemma}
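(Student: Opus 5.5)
The plan is to apply Lemma~\ref{lem:intrect} with $R = R(\bfx)$, so that $a_j = x_j$ and $b_j = x_{j+1}$ for $j=1,\ldots,n$, and then show that the zero property kills all but $n+1$ of the $2^n$ vertex terms. The vertex $\bfv_\bfeps$ has $j$-th coordinate $x_{j+\epsilon_j}$. So $\bfv_\bfeps$ has two consecutive equal coordinates, $t_j = t_{j+1}$, exactly when $x_{j+\epsilon_j} = x_{j+1+\epsilon_{j+1}}$. Since the $x_j$ are strictly increasing, this happens iff $\epsilon_j = 1$ and $\epsilon_{j+1} = 0$. By (\ref{eq:zero_property}), $\phi(\bfv_\bfeps)=0$ whenever $\bfeps$ contains the pattern $(1,0)$ in consecutive positions. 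The surviving $\bfeps$ are therefore the nondecreasing $0/1$ sequences, which have the form
$$\bfeps = (0,\ldots,0,1,\ldots,1).$$
There are exactly $n+1$ of these, indexed by the number of ones, $s(\bfeps) = 0,1,\ldots,n$.

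Next I match these surviving vertices with the points $\bfv_i$ of (\ref{eq:vi}). Take $\bfeps$ with $p$ zeros followed by $n-p$ ones. Its vertex is
$$(x_1,\ldots,x_p,x_{p+2},\ldots,x_{n+1}),$$
which is the sequence $x_1,\ldots,x_{n+1}$ with $x_{p+1}$ omitted. Comparing with (\ref{eq:vi}), where $\bfv_i$ omits $x_{n+2-i}$, this vertex is $\bfv_i$ with $p+1 = n+2-i$, i.e.\ $p = n+1-i$. Then $s(\bfeps) = n-p = i-1$, so the sign is
$$(-1)^{n-s(\bfeps)} = (-1)^{n+1-i}.$$
As $p$ runs over $0,\ldots,n$, $i$ runs over $1,\ldots,n+1$. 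Substituting into (\ref{eq:I}) gives (\ref{eq:int_simp}).

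One point needs checking: the zero property must make sense at the vertices, i.e.\ those vertices must lie in $R(\bfx)$. They do, since all vertices are in the closed rectangle and $\phi$ is defined there. In fact, the coordinates $t_i=t_{i+1}$ can only coincide on $R(\bfx)$ at the shared endpoint $x_{i+1}$, which is precisely the case above. The main (mild) obstacle is only this index bookkeeping between $\bfeps$, $p$ and $i$, and the sign; everything else is immediate from Lemma~\ref{lem:intrect}.
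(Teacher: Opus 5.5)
Your proposal is correct and follows the paper's proof essentially verbatim: you apply Lemma~\ref{lem:intrect} to $R(\bfx)$, use the zero property to discard every $\bfeps$ with a consecutive $(1,0)$ pattern, and match the $n+1$ non-decreasing sequences ($n+1-i$ zeros followed by $i-1$ ones) with the vertices $\bfv_i$ and the sign $(-1)^{n-s(\bfeps)}=(-1)^{n+1-i}$. Your explicit check that $x_{j+\epsilon_j}=x_{j+1+\epsilon_{j+1}}$ forces $\epsilon_j=1$, $\epsilon_{j+1}=0$ just spells out a step the paper states directly.
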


\begin{proof}
We apply Lemma~\ref{lem:intrect} to the sequential rectangle
$R = R(\bfx)$, whose vertices are
$$ \bfv_\bfeps := ((1-\epsilon_1)x_1 + \epsilon_1 x_2,
(1-\epsilon_2)x_2 + \epsilon_2 x_3,
	\ldots, (1-\epsilon_n)x_n + \epsilon_n x_{n+1}) $$
for $\bfeps = (\epsilon_1,\ldots,\epsilon_n) \in \{0,1\}^n$.
By the zero property (\ref{eq:zero_property}),
$\phi(\bfv_\bfeps) = 0$ whenever $\epsilon_i = 1$ and $\epsilon_{i+1} = 0$
for some $i \in \{1,2,\ldots,n-1\}$.
Thus $\phi(\bfv_\bfeps)$ can only be non-zero if the
sequence $\bfeps$ is non-decreasing,
$\epsilon_1 \le \epsilon_2 \le \cdots \le \epsilon_n$.
There are just $n+1$ such sequences:
\begin{equation}\label{eq:epsdistinct}
\bfeps_i := (\underbrace{0,\ldots,0}_{n+1-i},
	\underbrace{1,\ldots,1}_{i-1}),
\quad i=1,2,\ldots,n+1,
\end{equation}
and the corresponding $n+1$ vertices are
$\bfv_i := \bfv_{(\bfeps_i)}$ which are as in (\ref{eq:vi}).
Furthermore, $s(\bfeps_i) = i-1$.
Therefore, the right hand side of (\ref{eq:I}) reduces to
the right hand side of (\ref{eq:int_simp}).
\end{proof}

\section{Proof of Theorem~\ref{thm:ddid}}

We now complete the proof of Theorem~\ref{thm:ddid}.
Let $\Omega := R(\bfx)$.
Then, by (\ref{eq:yminmax}) and (\ref{eq:tsumbound}),
$s(\Omega) = [y_1,y_{n+1}]$,
and by Lemma~\ref{lem:chainV},
$$
 \int_{R(\bfx)}
V(\bft) f^{(n)}(s(\bft)) \, d\bft = 
 \int_{R(\bfx)}
\frac{\partial^n}{\partial t_1 \cdots \partial t_n}
\big(V(\bft) f(s(\bft))\big) \, d\bft,
$$
and by Lemma~\ref{lem:iid},
\begin{equation}\label{eq:thm2proof}
 \int_{R(\bfx)}
\frac{\partial^n}{\partial t_1 \cdots \partial t_n}
\big(V(\bft) f(s(\bft))\big) \, d\bft =
\sum_{i=1}^{n+1} (-1)^{n+1-i} V(\bfv_i)f(s(\bfv_i)),
\end{equation}
where $\bfv_i$ is as in (\ref{eq:vi}).
We see that $s(\bfv_i) = y_i$ by the
definition of $y_i$ in (\ref{eq:yidef}).
Further, by the definition of the $\bfv_i$ in (\ref{eq:vi}),
the factors in $V(\bfv_i)$ are those of
$V(\bfx)$ except those involving $x_{n+2-i}$,
and we can write
$$ (-1)^{n+1-i} V(\bfv_i) =
\frac{V(\bfx)}{\prod_{j=1, j \ne n+2-i}^{n+1}
(x_j-x_{n+2-i})}. $$
Recalling (\ref{eq:yialt}), we see that
$$ y_i - y_{n+2-j} = x_j - x_{n+2-i}, $$
and so
$$ \prod_{\substack{j=1 \\ j \ne n+2-i}}^{n+1} (x_j-x_{n+2-i})
= \prod_{\substack{j=1 \\ j \ne n+2-i}}^{n+1} (y_i-y_{n+2-j})
= \prod_{\substack{j=1 \\ j \ne i}}^{n+1} (y_i-y_j). $$
Thus the right hand side of (\ref{eq:thm2proof}) can be written as
$$ V(\bfx) \sum_{i=1}^{n+1} 
  \frac{f(y_i)}{\prod_{j=1, j \ne i}^{n+1}
(y_i-y_j)}
= V(\bfx) [y_1,y_2,\ldots,y_{n+1}]f. $$

\bibliography{ddvand}

\end{document}